\newtheorem{thm}{Theorem}
\newtheorem{lem}{Lemma}
\newtheorem{prop}{Proposition}
\newtheorem{rem}{Remark}
\theoremstyle{definition}
\newtheorem{example}{Example}
\def\Hom{\mathop{\fam 0 Hom}\nolimits}
\def\Cur{\mathop{\fam 0 Cur}\nolimits}
\def\Vir{\mathop{\fam 0 Vir}\nolimits}
\def\idd{\mathop{\fam 0 id}\nolimits}
\title[On the Ado Theorem for
  finite Lie conformal algebras]{On the Ado Theorem for
  finite Lie conformal algebras with Levi decomposition}
\author{Pavel Kolesnikov}
\thanks{Partially supported by RFBR (project 12.01.00329), FAPESP 2012/04704-0, and the Federal Target Program (contract 14.740.11.0346). }
\address{Sobolev Institute of Mathematics, Novosibirsk, Russia}
\begin{document}

\begin{abstract}
We prove that a finite torsion-free conformal Lie algebra
with a splitting solvable radical has a
finite faithful conformal representation.
\end{abstract}

\maketitle

\section{introduction}
Conformal algebras were introduced in \cite{Kac1996} as an algebraic
 formalism
describing properties of the singular part of the operator product
expansion (OPE) in conformal field theory (i.e., conformal Lie
algebra is a singular part of a vertex operator algebra). The
same notion is often called vertex Lie algebra (see, e.g.,
\cite{FBZ2001}).
The relations of conformal algebra with other areas of mathematics
include hamiltonian formalism in the theory of evolution equations
\cite{BDK2001}, variational calculus \cite{DSK2008}, and the
theory of algebraic structures arising from algebraic topology and
K-theory \cite{Loday2001}, see \cite{Kol2008}.

From the formal point of view,
the category of conformal algebras is the first member
(after ``ordinary'' algebras over a field) in
the hierarchy of pseudo-algebras over cocommutative bialgebras, as described in \cite{BDK2001}.
The case of ordinary algebras corresponds to the one-dimensional bialgebra $H=\Bbbk $,
conformal algebras are modules over the polynomial bialgebra $H=\Bbbk [\partial ]$.
As in the case of ordinary
algebras, an associative conformal algebra turns into a Lie conformal algebra
by means of the standard morphism of
the corresponding operads given by $x_1x_2\mapsto x_1x_2-x_2x_1$.
The converse statement does not hold in general: There exist Lie conformal algebras
that cannot be embedded into an associative one \cite{Roit2000}.
However, it is an open problem whether a {\em finite\/} (i.e.,
finitely generated over $H$) conformal Lie algebra
has such an embedding. A more precise problem can be stated as
follows: Whether a finite conformal Lie algebra which is torsion-free
as an $H$-module can be faithfully represented by
{\em conformal endomorphisms} \cite{Kac1996}
of a finitely generated torsion-free $H$-module $M$?
This statement would be the ``conformal analogue'' of the
classical Ado Theorem. The purpose of this paper is
to make a step towards the solution of this problem.

The Ado Theorem is a fundamental fact in the theory of
Lie algebras, however, it has a reputation of ``strange'' theorem
\cite{Ner2003}. Every known proof of this statement (e.g., the
classical ones \cite{JacLie}, \cite{Proc2005},
 or the short and elegant proof
in \cite{Ner2003}) exploits the following basic properties of
finite-dimensional Lie algebras:
\begin{itemize}
\item[(A1)] The Poincar\'e---Birkhoff---Witt (PBW) Theorem (at least
for nilpotent algebras);
\item[(A2)] Complete reducibility of finite-dimensional modules
over semisimple algebras;
\item[(A3)] The image of a solvable Lie algebra under its derivation
is nilpotent;
\item[(A4)] The Levi Theorem (splitting of the solvable radical).
\end{itemize}
For conformal algebras (even for finite ones),
these properties do not hold in general.
This is the reason why proving an analogue of the
Ado Theorem for conformal algebras is a challenging problem.

In \cite{Roit2005}, it was shown that a nilpotent
conformal Lie algebra embeds into a nilpotent associative
conformal algebra, thus, the property (A1) is not essential
for our purpose.

In \cite{Kol2011}, we eluded the property (A2):
The existence of a finite faithful representation
was proved for torsion-free finite Lie conformal algebras
 of ``classical type'', i.e., with a splitting solvable radical
and without Virasoro elements in the semisimple part.
These conformal algebras turn to be subalgebras of
current conformal algebras over finite-dimensional
ordinary Lie algebras. It is interesting to note
that the proof in \cite{Kol2011} was not based on
the Ado Theorem for ordinary algebras.
Therefore, the problem was solved for those conformal Lie
algebras satisfying the analogues of (A3) and (A4).

The purpose of this note is to eliminate (A3).
Namely, we prove the conformal version
of the Ado Theorem for an arbitrary finite torsion-free
conformal Lie algebra $L$
with a splitting solvable radical.

Let $\Bbbk $ be a field of zero characteristic.
Without loss of generality (see \cite[Lemma 7]{Kol2011})
we may assume $\Bbbk $
to be algebraically closed.
Throughout the paper, the symbol $\otimes $ without a subscript means
the tensor product of spaces over $\Bbbk $.

\section{Conformal algebras and their representations}

A {\em conformal algebra} \cite{Kac1996} is a unital module $C$ over
the polynomial algebra $H=\Bbbk[\partial ]$  equipped with a polynomial-valued
$\Bbbk $-linear operation
$(\cdot _{(\lambda ) \cdot}): C\otimes C \to C[\lambda ]$,
where $\lambda $ is a formal variable (this operation is called
{\em $\lambda $-product}),
satisfying the following axioms:
\begin{equation}\label{eq:SesquiLinear}
 (\partial a_{(\lambda )} b) = -\lambda (a_{(\lambda )} b) ,\quad
(a_{(\lambda )} \partial b) =(\lambda +\partial)(a_{(\lambda )} b).
\end{equation}

Conformal algebra is said to be {\em finite\/} if
it is finitely generated as a module over~$H$.

Equivalently, one may consider a conformal algebra $C$
as a linear space
over $\Bbbk $
with one linear operation $\partial $ and with an infinite family
of bilinear products $(\cdot _{(\alpha )}\cdot )$, $\alpha \in \Bbbk $,
where
$(a_{(\alpha )} b) = (a_{(\lambda )} b)|_{\lambda =\alpha}$
for $a,b\in C$.

Another interpretation of a conformal algebra structure is based on
the notion of a pseudo-tensor category.
Let us sketch the ideas developed in \cite{BDK2001} to clarify
the relations between ``ordinary'' and conformal algebras and
their representations.

Consider the polynomial algebra $H=\Bbbk[\partial ]$
and denote by $\mathcal M(H)$ the class of (left)
unital $H$-modules.
Recall that $H$ carries the standard
bialgebra structure given by
coproduct $\Delta (\partial) = \partial \otimes 1 + 1\otimes \partial $
and counit $\varepsilon(\partial ) = 0$.
Given $M_1, \dots, M_n, M\in \mathcal M$, denote
\begin{equation}\label{eq:HomP}
 P_n(M_1,\dots, M_n; M)= \Hom_{H^{\otimes n}} (M_1\otimes \dots \otimes M_n, H^{\otimes n}\otimes _H M),
\end{equation}
where $H^{\otimes n}$ is considered as the outer product
of regular right $H$-modules.
For the class $\mathcal M(H)$, the spaces \eqref{eq:HomP}
play the same role as the spaces of
polylinear maps for the class of linear spaces over a field.
There exist a composition rule
and an equivariant symmetric group action on \eqref{eq:HomP}
such that $\mathcal M(H)$ turns into a pseudo-tensor category in the
sense of  \cite{BD2004}.
Conformal algebra $C$ is exactly the same as an algebra
in $\mathcal M(H)$
with one binary operation (pseudo-product) $\mu \in P_2(C,C; C)$.
Namely, for $a,b\in C$,
\[
 (a_{(\lambda)} b) = \sum\limits_{n\ge 0} \lambda ^{n}
   c_n, \quad c_n\in C,
\]
we have
\[
 \mu (a,b) = \sum\limits_{n\ge 0} ((-\partial )^n \otimes 1) \otimes _H c_n.
\]
In these terms, \eqref{eq:SesquiLinear}
is equivalent to $H^{\otimes 2}$-linearity of~$\mu $.

These data are enough to define what is an algebra
(associative, commutative, Lie, etc.), a homomorphism,
a representation (module),
and a cohomology in the class $\mathcal M(H)$ \cite{BDK2001}.

For example, the associativity of a conformal
algebra $C$ may be expressed in terms of pseudo-product
as
\[
 \mu (\mu, \idd) = \mu(\idd, \mu) \in P_3(C,C,C; C),
\]
and in terms of $\lambda $-product as
\[
 (a_{(\alpha )} (b_{(\beta )} c)) = ((a_{(\alpha )} b)_{(\alpha+\beta )} c), \quad a,b,c\in C,\ \alpha,\beta\in \Bbbk .
\]
Similarly, the anti-commutativity and the Jacobi
identity for a pseudo-product $\mu $ have the same form
as for ``ordinary" product:
\[
 \begin{gathered}
  \mu = -\mu^{(12)} , \\
  \mu (\idd, \mu ) - \mu (\idd, \mu )^{(12)} =   \mu (\mu, \idd).
 \end{gathered}
\]
In terms of $\lambda $-product, these relations turn into
\[
 \begin{gathered}
  (a_{(\alpha )} b) = - (b_{(-\partial - \alpha )} a), \\
 (a_{(\alpha )} (b_{(\beta )} c)) - (b_{(\beta )} (a_{(\alpha )}  c))= ((a_{(\alpha )} b)_{(\alpha+\beta )} c) ,
 \end{gathered}
\]
for $a,b,c\in C$, $\alpha,\beta \in \Bbbk $,
respectively.

As in the case of ordinary algebras, an associative conformal
algebra $C$ with respect to
the new $\lambda $-product
\[
 [a_{(\alpha )} b] = (a_{(\alpha )} b) - (b_{(-\partial - \alpha )} a)
\]
satisfies anti-commutativity and Jacobi identity,
i.e., is a Lie conformal algebra.
Below we will use the brackets $[\cdot_{(\lambda )}\cdot ]$
to denote $\lambda $-products in all Lie conformal algebras.

The notions of an ideal, solvability, and nilpotence
have the natural interpretation in conformal
 algebras.
A conformal algebra $C$ is called {\em trivial\/} if
$C_{(\alpha )} C = 0$ for all $\alpha \in \Bbbk $.
A nontrivial conformal algebra is {\em simple\/} if it has
no nonzero proper ideals. If $C$ has no nonzero
solvable ideals then it is said to be {\em semisimple}.

To define a conformal algebra structure on an $H$-module, it is enough
to define the $\lambda $-product on its generators over $H$
and then expand it to
 the entire module by means of \eqref{eq:SesquiLinear}.

\begin{example}
Let $\mathfrak a$ be an ordinary algebra over $\Bbbk $.
Then the free $H$-module $H\otimes \mathfrak a$
can be considered as a conformal algebra with respect to
operation
\[
a_{(\lambda )} b = ab,
\quad a,b\in \mathfrak a,
\]
called {\em current conformal algebra} $\Cur \mathfrak a$.
If $\mathfrak a$ is associative (Lie, alternative, etc) then so is
$\Cur \mathfrak a$.
\end{example}

\begin{example}
The free $H$-module of rank one generated by an element $v$
turns into a Lie conformal algebra by means of the operation
\[
[v_{(\lambda )} v] = (\partial +2\lambda )v.
\]
This structure is called {\em Virasoro conformal algebra} $\Vir $.
\end{example}

\begin{example}
Suppose $\mathfrak g$ is a Lie algebra.
The direct sum of $H$-modules
$Hv\oplus (H\otimes \mathfrak g)$
with respect to
\[
\begin{gathered}[]
[v_{(\lambda )} v] = (\partial +2\lambda )v,
\quad
[v_{(\lambda )} a] = (\partial+\lambda )a, \ a\in \mathfrak g\\
[a_{(\lambda )} b] = [a,b],
\ a,b\in \mathfrak g,
\end{gathered}
\]
 is a Lie conformal algebra denoted by
$\Vir\ltimes \Cur \mathfrak g$,
the semi-direct product of conformal algebras.
\end{example}

The classification of finite conformal algebras is given by the following

\begin{thm}[\cite{DK1998}]\label{thm:ConfAlgebras}
{\rm (i)} A simple finite Lie conformal algebra
is isomorphic either to $\Vir $ or to $\Cur \mathfrak g$,
where $\mathfrak g$ is a finite-dimensional simple Lie algebra.

{\rm (ii)} A semi-simple finite Lie conformal algebra is
a direct sum of conformal algebras $C_1\oplus \dots \oplus C_k$,
where each $C_i$ is either simple or isomorphic to
$\Vir\ltimes \Cur\mathfrak g$, $\mathfrak g$ is a simple finite-dimensional
Lie algebra.
\end{thm}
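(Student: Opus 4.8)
The plan is to replace the conformal algebra by an ordinary (infinite-dimensional) Lie algebra for which a structure theory is available, and then read the answer back. To a finite Lie conformal algebra $R$ one attaches its \emph{annihilation algebra} $\mathcal A=\mathcal A(R)$: take $R\otimes\Bbbk[t]$, write $a_n=a\otimes t^n$, impose the relation coming from $\partial+\partial_t\mapsto 0$ (so that $(\partial a)_n=-n\,a_{n-1}$), and define a bracket $[a_m,b_n]$ by reading off the Taylor coefficients $c_j$ of the $\lambda$-bracket $[a_{(\lambda)}b]=\sum_j\lambda^j c_j$ and distributing them over powers of $t$. Because $R$ is finitely generated over $H$, the subspaces spanned by the $a_n$ with $n$ large are of finite codimension, so $\mathcal A$ is a \emph{linearly compact} Lie algebra; moreover $\partial$ acts on it as a continuous derivation (essentially $-\partial_t$), and the full conformal structure of $R$ can be reconstructed from the pair $(\mathcal A,\partial)$. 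Classifying simple finite $R$ thus becomes the problem of classifying the linearly compact Lie algebras that arise in this way, together with their distinguished derivation.

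The next step is to set up the dictionary between conformal and linearly compact notions. Simplicity of $R$ should force $\mathcal A$ to be simple (up to a small center accounting for the action of $\partial$), and---this is the decisive point---finiteness of $R$ over $H=\Bbbk[\partial]$ should translate into a sharp bound on the growth of $\mathcal A$: the associated graded components grow only linearly, so $\mathcal A$ has ``dimension one'' in the appropriate Gelfand--Kirillov sense. This growth bound is what will cut the infinite list of candidates down to the two families in the statement.

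I would then invoke the Cartan--Guillemin--Blattner--Sternberg classification of simple linearly compact Lie algebras: each such algebra is either finite-dimensional simple or one of the four formal-vector-field series $W_n,S_n,H_n,K_n$. The growth-one bound discards every series with $n>1$, together with the degenerate small members, leaving only $W_1=\mathrm{Der}\,\Bbbk[[t]]$ and the current algebras $\mathfrak g\otimes\Bbbk[[t]]$ with $\mathfrak g$ finite-dimensional simple. Transporting these back through the annihilation construction, and tracking the derivation $\partial$, recovers exactly $\Vir$ and $\Cur\mathfrak g$, which proves part (i). For part (ii), a semisimple $R$ has an annihilation algebra that is topologically a direct sum of the simple building blocks of (i), except that a Virasoro block may now act nontrivially on a current block. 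I would classify the admissible actions of $\Vir$ on $\Cur\mathfrak g$ as conformal derivations compatible with \eqref{eq:SesquiLinear} and with the Jacobi identity; the only nonsplit possibility turns out to be $[v_{(\lambda)}a]=(\partial+\lambda)a$, defining the semidirect product $\Vir\ltimes\Cur\mathfrak g$ introduced above, while everything else splits off as a direct summand.

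The main obstacle is the classification input itself. Building the dictionary precisely---pinning down how simplicity and the growth bound manifest in $\mathcal A$, and verifying that the derivation $\partial$ recovers $R$ \emph{uniquely} from $\mathcal A$---is delicate, and the underlying Cartan-type classification of simple linearly compact Lie algebras is a deep theorem in its own right. In part (ii) the other sensitive point is ruling out exotic, non-split extensions between the Virasoro and current factors, i.e.\ showing that no action beyond $(\partial+\lambda)a$ is compatible with the axioms.
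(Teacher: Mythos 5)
The paper offers no proof of this statement: it is quoted verbatim from D'Andrea--Kac \cite{DK1998} and used as an external input, so there is no internal argument to compare yours against. Your sketch does, however, correctly reproduce the strategy of the cited source itself --- pass to the annihilation algebra, observe that finiteness over $H=\Bbbk[\partial]$ forces linear growth of the associated linearly compact Lie algebra, invoke the Cartan--Guillemin classification to isolate $W_1$ and $\mathfrak g\otimes\Bbbk[[t]]$, and translate back to $\Vir$ and $\Cur\mathfrak g$; the treatment of (ii) via classifying the admissible $\Vir$-actions on current summands is likewise the route taken there. Be aware that as written this is a roadmap rather than a proof: the two decisive ingredients (the precise equivalence between $R$ and the pair $(\mathcal A,\partial)$, including why simplicity and finiteness transfer, and the classification of simple linearly compact Lie algebras) are exactly the deep parts, and you defer both. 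One small technical correction: to get linear compactness you must work with the completed algebra built from $R\otimes\Bbbk[[t]]$ (or the closure of the span of the nonnegative modes $a_n$, $n\ge 0$), not with $R\otimes\Bbbk[t]$ as stated. In part (ii) you should also check whether a single Virasoro summand can act nontrivially on several current blocks simultaneously, which affects the exact shape of the indecomposable summands in the decomposition.
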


An arbitrary finite associative or Lie conformal algebra $C$
obviously has a maximal solvable ideal (radical) $R$ such that $C/R$ is
a semisimple conformal algebra. However, for Lie conformal algebras
there is no analogue of the Levi Theorem stating
$C \simeq (C/R)\ltimes R$.
As in the case of ordinary Lie algebras, the abelian extensions of
conformal algebras are described in terms of the second
cohomology group. The corresponding notions were introduced
in \cite{BKV1999}, were cohomologies of
simple and semisimple finite conformal algebras with coefficients
in their irreducible modules were computed;
many of them are nontrivial.

Suppose $C $ is an associative (Lie) conformal algebra.
Then a {\em conformal module\/} over $C$ is an $H$-module
$M\in \mathcal M(H)$ equipped with
$\nu \in P_2(C,M;M)$ satisfying the appropriate
associativity (Jacobi) identity. It terms of
$\lambda $-product these notions were introduced
and studied in \cite{CK1997}

\begin{rem}
In this study, we consider those conformal algebras
(and their modules)
that are torsion-free as $H$-modules. The reason
for such a restriction comes from the following
observation \cite{DK1998, BDK2001}. If $U,V,W\in \mathcal M(H)$,
$\mu \in P_2(U,V;W)$, then
$\mu (a,V)=\mu(U,b)=0$ for every torsion elements $a\in U$,
$b\in V$. Hence, a conformal algebra with a nonzero $H$-torsion
has no faithful representations.
\end{rem}

Irreducible representations of finite simple and
semisimple Lie conformal algebras are
described by

\begin{thm}[\cite{CK1997}]\label{thm:ConfModules}
{\rm (i)} A finite irreducible conformal module over
$\Vir $ is a free $H$-module of rank one generated by
an element $u$ such that
\[
 v_{(\lambda )} u = (\alpha +\partial +\Delta\lambda )u, \quad \alpha,\Delta\in \Bbbk, \ \Delta\ne 0.
\]
Such a module is denoted $M_{\alpha, \Delta}$.

{\rm (ii)} Suppose $\mathfrak g$ is a finite-dimensional
simple Lie algebra. A finite irreducible conformal module over $\Cur\mathfrak{g}$
is isomorphic to $H\otimes U$, where $U$ is a finite-dimensional irreducible
$\mathfrak{g}$-module, and
\[
 g_{(\lambda)} u = gu, \quad g\in \mathfrak{g}, \ u\in U.
\]
This module is natural to denote by $\Cur U$.

{\rm (iii)}
Suppose $\mathfrak g$ is a finite-dimensional
simple Lie algebra.
A finite conformal module $M_{\alpha,\Delta,U}$ over $\Vir\ltimes \Cur\mathfrak{g}$
is constructed as $H\otimes U$, where
$U$ is a finite-dimensional $\mathfrak{g}$-module, and
\[
\begin{gathered}
v_{(\lambda )} u  = (\alpha + \partial +\Delta\lambda )u,
\quad \alpha,\Delta\in \Bbbk, \\
g_{(\lambda )} u =gu, \quad g\in \mathfrak{g}, \ u\in U.
\end{gathered}
\]
Every finite irreducible conformal module over $\Vir\ltimes \Cur\mathfrak{g}$
is isomorphic to $M_{\alpha,\Delta,U}$,
where either $U$ is an irreducible $\mathfrak{g}$-module (and $\Delta $ is an
arbitrary scalar) or
$U$ is trivial one-dimensional and $\Delta \ne 0$.
\end{thm}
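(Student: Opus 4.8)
The plan is to handle the three cases uniformly by passing from the $\lambda$-action to its polynomial ``symbol'' and using that a finite torsion-free conformal module is automatically a free $H$-module of finite rank. Writing $M=\bigoplus_i He_i$ and encoding each action as a matrix of polynomials, say $v_{(\lambda)}e_i=\sum_j p_{ij}(\lambda,\partial)e_j$ for $\Vir$, the sesquilinearity relations \eqref{eq:SesquiLinear} only serve to propagate the action from $m$ to $\partial m$, so the genuine constraints come from the Jacobi identity. For $\Vir$ the computation $((\partial+2\lambda)v)_{(\lambda+\mu)}m=(\lambda-\mu)v_{(\lambda+\mu)}m$ converts it into the matrix functional equation
\[
 P(\mu,\partial+\lambda)P(\lambda,\partial)-P(\lambda,\partial+\mu)P(\mu,\partial)=(\lambda-\mu)P(\lambda+\mu,\partial),
\]
with $P=(p_{ij})$, and the whole classification reduces to solving this and its analogues.

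For part (i) I would first prove that an irreducible finite $\Vir$-module has rank one. Setting $\mu=0$ in the displayed identity and writing $B(\partial)=P(0,\partial)$ gives $B(\partial+\lambda)P(\lambda,\partial)-P(\lambda,\partial)B(\partial)=\lambda P(\lambda,\partial)$, an intertwining relation whose analysis over the algebraically closed field $\Bbbk$ produces a $\partial$-stable, action-stable eigenspace decomposition of $M$; irreducibility then forces $r=1$. (Equivalently, one identifies the Fourier modes of $v$ with the polynomial vector fields $t^{n+1}\,d/dt$, so that $M$ becomes a module over the Lie algebra of regular vector fields, and invokes the classification of its finite modules as tensor-density modules.) Once $r=1$ a degree count shows the scalar $p(\lambda,\partial)$ is affine in both variables, and substituting $p=a(\partial)+b\lambda$ into the scalar equation forces $b=\Delta$ constant and $a(\partial)=\partial+\alpha$, i.e.\ $p=\alpha+\partial+\Delta\lambda$. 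Finally $H(\partial+\alpha)u$ is a proper invariant submodule precisely when $\Delta=0$, which gives the criterion $\Delta\ne0$.

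For part (ii) the point is that the Fourier modes of the elements $g\in\mathfrak g$ span a copy of the current Lie algebra $\mathfrak g\otimes\Bbbk[t]$ acting on $M$, with $\partial$ acting as $-d/dt$. Finiteness of $M$ forces this to be an evaluation action: the positive modes $\mathfrak g\otimes t\Bbbk[t]$ annihilate a minimal $H$-generating subspace, so that $M\cong H\otimes U$ with $g_{(\lambda)}u=gu$ for a finite-dimensional $\mathfrak g$-module $U$, which is exactly $\Cur U$. Since $\mathfrak g$ is simple, its finite-dimensional modules are completely reducible, and irreducibility of the conformal module is equivalent to irreducibility of $U$.

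For part (iii) I would restrict a finite conformal $\Vir\ltimes\Cur\mathfrak g$-module to each subalgebra in turn: the $\Vir$-restriction realises $M$ as in part (i) and the $\Cur\mathfrak g$-restriction as in part (ii), jointly presenting $M$ as $H\otimes U$ with $v_{(\lambda)}u=(\alpha+\partial+\Delta\lambda)u$ and $g_{(\lambda)}u=gu$. The remaining mixed relation $[v_{(\lambda)}a]=(\partial+\lambda)a$ must be imposed on the combined action; a direct check (using $((\partial+\lambda)g)_{(\lambda+\mu)}m=-\mu\,g_{(\lambda+\mu)}m$) shows it holds automatically for this Ansatz, so the modules are exactly the $M_{\alpha,\Delta,U}$. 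The irreducibility criterion then follows by analysing the $H$- and $\mathfrak g$-stable submodules of $H\otimes U$: any such submodule has a $\mathfrak g$-stable image in the fiber, so $U$ must be irreducible, and a short computation with the $v$-action shows that the only remaining proper invariant submodule is the $\partial$-shift $H(\partial+\alpha)\otimes U$, which occurs exactly when $\Delta=0$ and $\mathfrak g$ acts trivially on $U$. This yields precisely the stated list: arbitrary $\Delta$ for a nontrivial irreducible $U$, and $\Delta\ne0$ for the trivial one-dimensional $U$. The main obstacle throughout is the rank and finiteness reduction --- ruling out exotic finite modules beyond the density- and evaluation-type families --- since that is where one must genuinely control the infinite-dimensional annihilation Lie algebra rather than merely solve a polynomial identity.
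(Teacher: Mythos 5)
This theorem is quoted from Cheng--Kac \cite{CK1997}; the paper supplies no proof of its own, so there is nothing internal to compare against, and your sketch has to stand on its own merits. As a strategy it points in the right direction (pass to the annihilation Lie algebra of Fourier modes and classify), but each of the three parts leans on an unproved claim that is in fact the substance of the theorem. In (i), the reduction to rank one is the whole difficulty: the intertwining relation $B(\partial+\lambda)P(\lambda,\partial)-P(\lambda,\partial)B(\partial)=\lambda P(\lambda,\partial)$ does not obviously yield a ``$\partial$-stable, action-stable eigenspace decomposition,'' because $B(\partial)$ is a matrix over $\Bbbk[\partial]$ whose eigenvalues are algebraic functions of $\partial$ rather than polynomials, so eigenspaces need not be $H$-submodules; and your parenthetical fallback --- invoking the classification of finite modules over the Lie algebra of vector fields as tensor-density modules --- is precisely the Rudakov/Cheng--Kac result one is trying to establish, not something one may assume. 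In (ii), the sentence ``finiteness of $M$ forces this to be an evaluation action'' is again the entire content of the statement; proving that $\mathfrak g\otimes t\Bbbk[t]$ kills a generating subspace of an irreducible finite module requires the locality/filtration argument for the annihilation algebra (each element of $M$ is killed by $\mathfrak g\otimes t^N\Bbbk[t]$ for some $N$, then one must drive $N$ down to $1$ using simplicity of $\mathfrak g$), none of which is carried out.

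The most concrete logical flaw is in (iii): you propose to ``restrict to each subalgebra in turn,'' asserting that the $\Vir$-restriction realises $M$ as in part (i) and the $\Cur\mathfrak g$-restriction as in part (ii). But parts (i) and (ii) classify \emph{irreducible} modules, and the restriction of an irreducible $\Vir\ltimes\Cur\mathfrak g$-module to a subalgebra need not be irreducible --- indeed for $M_{\alpha,\Delta,U}$ with $U$ trivial one-dimensional the $\Cur\mathfrak g$-restriction is a trivial module --- nor can one appeal to complete reducibility of the restriction, since the paper itself emphasises (Section 3) that finite conformal modules in general admit neither direct-sum decompositions into irreducibles nor finite composition series. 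So the joint presentation $M\cong H\otimes U$ with the stated $v$- and $g$-actions does not follow from your restriction argument; it has to be extracted from the extended annihilation algebra of the semidirect product as in \cite{CK1997}. The parts of your write-up that do hold up are the elementary verifications: the compatibility check for the mixed relation, and the irreducibility criteria (a nonzero submodule of $M_{\alpha,\Delta}$ contains $u$ when $\Delta\ne 0$; a nonzero $\mathfrak g$-stable submodule of $H\otimes U$ meets $U$ when $U$ is nontrivial irreducible). Those are correct but are the easy half of the theorem.
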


If $L$ is a Lie conformal algebra and $a\in L$ then
the operation $D_\lambda =[a_{(\lambda )}\cdot ]: L\to L[\lambda ]$
has the following property:
$D_\lambda ([x_{(\mu)} y]) = [(D_\lambda x)_{(\lambda +\mu )} y] +
 [x_{(\mu)} (D_\lambda y)]$
for all $x,y \in L$.
In general, such a map $D_\lambda $ is called a {\em conformal derivation} of $L$.

\begin{lem}[c.f. {\cite[Proposition 3.1]{CK1997}}]\label{lem:Triv*Irred=0}
Assume $C$ is a conformal algebra which is a conformal
module over $\Vir =Hv$ such that $D_\lambda = (v_{\lambda } \cdot )$
is a conformal derivation of $C$.
Suppose $M$ is a conformal $\Vir $-submodule of $C$ isomorphic to $M_{\alpha,\Delta}$
for some $\alpha,\Delta \in \Bbbk $, and $N$ is a trivial conformal $\Vir $-submodule
of $C$. Then $(N_{(\lambda )} M) =0$ in $C$.
\end{lem}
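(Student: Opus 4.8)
The plan is to reduce the claim to a single one-variable identity and then finish by a degree count in the product variable. First I would reduce to generators. Since $M\cong M_{\alpha,\Delta}$ is a free $H$-module of rank one on a vector $u$ with $v_{(\lambda)}u=(\alpha+\partial+\Delta\lambda)u$, and since sesquilinearity \eqref{eq:SesquiLinear} gives $(n_{(\mu)}p(\partial)u)=p(\mu+\partial)(n_{(\mu)}u)$ for every $p\in H$ and every $n\in N$, it suffices to prove that $w:=(n_{(\mu)}u)=0$ for an arbitrary $n\in N$. This peels off the $H$-action on $M$ and leaves only the generator.

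Next I would exploit that $D_\lambda=(v_{(\lambda)}\cdot)$ is a conformal derivation of $C$. Applying the derivation identity to $x=n$ and $y=u$ and using that $N$ is a \emph{trivial} $\Vir$-submodule (so $v_{(\lambda)}n=0$), the term $((v_{(\lambda)}n)_{(\lambda+\mu)}u)$ vanishes and we are left with $v_{(\lambda)}w=(n_{(\mu)}(v_{(\lambda)}u))$. Substituting $v_{(\lambda)}u=(\alpha+\partial+\Delta\lambda)u$ and pushing $\partial$ through the $C$-product by sesquilinearity yields the key relation
\[
 v_{(\lambda)}w=(\alpha+\mu+\partial+\Delta\lambda)\,w .
\]
The crucial feature is that differentiating $u$ converts the $H$-action $\partial$ into the scalar shift $\mu+\partial$, so an \emph{extra} factor of $\mu$ appears on the right-hand side.

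The final step is a degree count in $\mu$. Writing $w=\sum_{k=0}^{N}\mu^{k}c_k$ with $c_k\in C$ and $c_N\ne 0$ (assuming $w\ne 0$), the left-hand side $v_{(\lambda)}w=\sum_k \mu^{k}(v_{(\lambda)}c_k)$ has $\mu$-degree at most $N$, whereas the summand $\mu\,w$ on the right contributes $\mu^{N+1}c_N$. Comparing the coefficients of $\mu^{N+1}$ forces $c_N=0$, a contradiction; hence $w=0$, and the lemma follows.

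I expect the only genuine subtlety to be bookkeeping: keeping the module variable $\lambda$ (from the $\Vir$-action) and the product variable $\mu$ (from the $C$-multiplication) rigorously separate, and performing the comparison in $C[\lambda,\mu]$ viewed as polynomials in $\mu$ over $C[\lambda]$. It is worth remarking that the argument uses only the presence of the $H$-term $\partial$ in $v_{(\lambda)}u$, not the nonvanishing of $\Delta$; the hypothesis $\Delta\ne 0$ merely records that $M_{\alpha,\Delta}$ is an irreducible $\Vir$-module in the sense of Theorem~\ref{thm:ConfModules}(i).
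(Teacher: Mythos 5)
Your proof is correct and takes essentially the same route as the paper's: both use the derivation property together with the triviality of $N$ to obtain $v_{(\lambda)}\bigl(n_{(\mu)}u\bigr)=(\alpha+\mu+\partial+\Delta\lambda)\bigl(n_{(\mu)}u\bigr)$ (the paper merely specializes the derivation to $D_0$ first, which changes nothing), and both conclude by comparing degrees in the product variable, where the extra factor $\mu$ coming from pushing $\partial$ through the product forces the leading coefficient to vanish. Your closing observation that $\Delta\ne 0$ is not needed is likewise consistent with the paper, which states the lemma for arbitrary $\alpha,\Delta\in\Bbbk$.
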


\begin{proof}
 Denote $M=Hu$, where
 $(v_{(\lambda )}u )= (\alpha+\partial +\lambda \Delta)u$.
Choose an arbitrary $a\in N$  and consider
$a_{(\lambda )}u = \sum_i \lambda^ i c_i \in C[\lambda ]$.
Then
\begin{equation}\label{eq:DerProd1}
D_0 (a_{(\lambda)} u) = a_{(\lambda)} D_0u = (\alpha +\lambda +\partial ) (a_{\lambda } u).
\end{equation}
Since $D_0 \big(\sum_i \lambda^i c_i \big) = \sum_i \lambda ^i D_0c_i$,
the degree in $\lambda $ of the left-hand side of \eqref{eq:DerProd1}
does not exceed the degree of $(a_{(\lambda )} u)$. If the latter is finite
then the right-hand side of \eqref{eq:DerProd1} has a greater degree
than  $(a_{(\lambda )} u)$. The contradiction obtained proves $(a_{(\lambda )} u)=0$.
\end{proof}

\section{Composition series in conformal modules}

Note that a finite conformal module even over a finite
simple Lie conformal algebra cannot (in general) be decomposed
into a direct sum of irreducible ones
(see \cite{CKW1998} for a systematic study of extensions).
Moreover, although the lattice of conformal submodules
in a given module
is modular (Dedekind), a finite composition series may not exist
even in a finite conformal module. As an example, consider
the free $H$-module $M_1$ of rank one with respect to a trivial
action of a conformal algebra (say, over $\Vir $).
Then there exists a normal series
\[
  0\subset \partial^n M_1 \subset \partial^{n-1}M_1\subset \dots \subset \partial M\subset M
\]
of arbitrary length $n$. However, we may still apply a kind of
triangular decomposition to those conformal modules we need (see
Lemma~\ref{lem:Triang} below).

Some of the results of this section can be recovered from
\cite{CK1997, CKW1998}, but we state
their proofs for readers' convenience.

\begin{lem}\label{lem:SplitExtMod}
Let $L$ be a conformal algebra of type $\Vir\ltimes \Cur\mathfrak g$,
where $\mathfrak g$ is either a finite-dimensional simple
Lie algebra or $\mathfrak g=0$ (i.e., $L=\Vir $).
Suppose $V$, $M$, and $E$ are three finite conformal modules over $L$
such that $M$ is irreducible, $V$ is a trivial torsion-free
$L$-module, and there exists a short exact sequence
\begin{equation}\label{eq:ExtSequence}
0\to V\to E\to M\to 0
\end{equation}
of conformal modules over $L$. Then $E\simeq V\oplus M$, the direct sum
of conformal modules over $L$.
\end{lem}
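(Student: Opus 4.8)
The plan is to split \eqref{eq:ExtSequence} by producing an $L$-equivariant $H$-linear section. Since $M\cong H\otimes U$ is a free $H$-module by Theorem~\ref{thm:ConfModules}, the sequence splits over $H$, so I fix an $H$-linear section $s\colon M\to E$ and record its failure to be conformal by the two cochains $\phi_\lambda(m)=v_{(\lambda)}s(m)-s(v_{(\lambda)}m)$ and $\psi_\lambda(g,m)=g_{(\lambda)}s(m)-s(g_{(\lambda)}m)$ for $m\in M$, $g\in\mathfrak g$. Since $V$ is a submodule with quotient $M$, both cochains take values in $V[\lambda]$, and since $V$ is a trivial module, $L$ annihilates their values. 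Replacing $s$ by $s-\tau$ for an $H$-linear map $\tau\colon M\to V$ changes $\phi_\lambda(u)$, for $u\in U$, into $\phi_\lambda(u)+(\alpha+\partial+\Delta\lambda)\tau(u)$; hence splitting \eqref{eq:ExtSequence} as a $\Vir$-module amounts to solving $\phi_\lambda(u)=-(\alpha+\partial+\Delta\lambda)\tau(u)$ for some such $\tau$.

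The key reduction is that a $\Vir$-equivariant section is automatically $L$-equivariant. Indeed, the mixed relation $[v_{(\lambda)}g]=(\partial+\lambda)g$ yields the module identity $v_{(\lambda)}(g_{(\mu)}e)-g_{(\mu)}(v_{(\lambda)}e)=-\mu\,g_{(\lambda+\mu)}e$. Applying it to $e=s(u)$ once $\phi\equiv0$, and using sesquilinearity $g_{(\mu)}(\partial e)=(\mu+\partial)g_{(\mu)}e$ together with $L\cdot V=0$, I obtain $(\alpha+\partial+\mu+\Delta\lambda)\psi_\mu(g,u)=\mu\,\psi_{\lambda+\mu}(g,u)$; setting $\lambda=0$ gives $(\alpha+\partial)\psi_\mu(g,u)=0$, whence $\psi\equiv0$ because $V$ is torsion-free and $\alpha+\partial$ acts injectively on it. Thus it suffices to split \eqref{eq:ExtSequence} as a module over $\Vir$ alone.

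To build the $\Vir$-section I exploit $[v_{(\lambda)}v]=(\partial+2\lambda)v$, whose module identity forces $(\alpha+\partial)\phi_\lambda(u)=(\alpha+\partial+\Delta\lambda)\phi_0(u)$; torsion-freeness then makes $\phi_\lambda(u)=c_0(u)+\lambda c_1(u)$ affine with $(\alpha+\partial)c_1=\Delta c_0$. When $\Delta\ne0$ I take $\tau(u)=-c_1(u)/\Delta$, and the last relation shows $\phi_\lambda(u)=-(\alpha+\partial+\Delta\lambda)\tau(u)$, as needed. The delicate case is $\Delta=0$, where $\phi_\lambda(u)=c_0(u)$ is constant and the Virasoro data by themselves do not separate $V$ from $M$: I must instead show $c_0(u)\in(\alpha+\partial)V$, so that $\tau(u)=-(\alpha+\partial)^{-1}c_0(u)$ is defined. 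Here Theorem~\ref{thm:ConfModules} guarantees that $U$ is a nontrivial irreducible $\mathfrak g$-module, so $\mathfrak gU=U$; and the mixed identity of the second paragraph, now applied to the original $s$, gives $\phi_\lambda(gu)=(\alpha+\partial+\Delta\lambda)\psi_0(g,u)$, i.e.\ $c_0(gu)=(\alpha+\partial)\psi_0(g,u)\in(\alpha+\partial)V$. Since $\mathfrak gU=U$ this forces $c_0(u)\in(\alpha+\partial)V$ for every $u$, producing the desired $\tau$. In either case $s-\tau$ is $\Vir$-equivariant, hence $L$-equivariant by the previous paragraph, and $E\simeq V\oplus M$ follows. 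I expect the $\Delta=0$ case — where the Virasoro grading fails to separate the extension and one must feed in the current action through the mixed bracket — to be the main obstacle.
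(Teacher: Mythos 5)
Your proposal is correct and follows essentially the same route as the paper: your cochains $\phi_\lambda$, $\psi_\lambda$ measuring the failure of an $H$-linear section to be conformal are exactly the paper's cocycle $\varphi_\lambda(v,\cdot)$, $\varphi_\lambda(g,\cdot)$, the identities you derive from the module axioms are the cocycle condition \eqref{eq:Cocycle} specialized to $a=b=v$ and $a=v$, $b=g$, and your case split $\Delta\ne 0$ versus $\Delta=0$ (with $\mathfrak gU=U$ feeding the current action back in) matches the paper's Cases 1--2 versus Case 3. The only cosmetic difference is that in the $\Delta=0$ case you show directly that $c_0(u)\in(\alpha+\partial)V$, whereas the paper first normalizes $\varphi_\lambda(v,u)$ to constants and then kills them by a degree argument.
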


\begin{proof}
Extensions of conformal modules may be described via the corresponding
conformal cocycles (see \cite{CKW1998, DSK2008}).
Let $L$, $M$, and $V$ be as in the statement.
A $\Bbbk $-linear map
$\varphi_\lambda : L\otimes M \to V[\lambda ]$ satisfying the 3/2-linearity condition
similar to \eqref{eq:SesquiLinear} is called a cochain.
If a cochain $\varphi_\lambda $ satisfies
\begin{equation}\label{eq:Cocycle}
 \varphi_\lambda (a, b_{(\mu )} u) - \varphi_\mu (b, a_{(\lambda )} u) = \varphi_{\lambda +\mu}([a_{(\lambda)} b], u)
\end{equation}
for all $a,b\in L$, $u\in M$ then
$\varphi_\lambda $ is said to be a cocycle.
For every $H$-linear map $\tau : M\to V$ its differential
$\delta_\lambda\tau : L\otimes M \to V$
defined by $(\delta_\lambda\tau)(a, u) = \tau(a_{(\lambda )}u)$
is a cocycle.

Obviously, every cocycle $\varphi_\lambda : L\otimes M \to V[\lambda ]$
gives rise to a conformal $L$-module $E=E(M,V,\varphi )$
such that the sequence \eqref{eq:ExtSequence} is exact.
Conversely, every exact sequence \eqref{eq:ExtSequence}
allows to define the corresponding cocycle \cite[Theorem~2.1]{DSK2008}.
Moreover, for given cocycles $\varphi_\lambda $ and $\psi_\lambda $,
the extensions $E(M,V,\varphi )$ and
$E(M,V,\psi )$ are isomorphic if and only if
$\varphi_\lambda - \psi_\lambda = \delta_\lambda \tau$
for an appropriate~$\tau : M\to V$.

To prove the statement, it is enough to show
that every cocycle $\varphi_\lambda: L\otimes M\to V$
is equal to $\delta_\lambda \tau $ for some $\tau: M\to V$.
By Theorem \ref{thm:ConfModules}, there are three cases to be considered.

Case 1: $L=\Vir$, $M=M_{\alpha, \Delta}$, $\Delta\ne 0$;

Case 2: $L=\Vir\ltimes \Cur \mathfrak g$, $M=M_{\alpha, \Delta, U}$, $\Delta\ne 0$;

Case 3:  $L=\Vir\ltimes \Cur \mathfrak g$, $M=M_{\alpha, 0, U}$, $U$ is a nontrivial $\mathfrak g$-module.

Let us consider Case 2 and Case 3 in details since Case 1 is completely covered by calculations
from Case 2.

In Case 2, suppose $\varphi_\lambda (v,u) = \sum_i f_i(\partial, \lambda )w_i(u)$, $u\in U$, where
$f_i(\partial, \lambda )\in \Bbbk [\partial, \lambda ]$, $w_i(u)$ are linearly
independent over $\Bbbk $ in $V$.
Then by \eqref{eq:Cocycle} we have (for $a=b=v$, $\mu = 0$)
\[
 (\alpha +\partial) f_i(\partial, \lambda ) = (\alpha +\Delta\lambda + \partial) f_i(\partial, 0).
\]
Since $\Delta\ne 0$, $\alpha +\partial$
divides
$f_i(\partial, 0)$, so $f_i(\partial, \lambda ) = (\alpha+\Delta\lambda +\partial )h_i(\partial )$.
Define
$\tau(u) = \sum_i h_i(\partial )w_i(u) \in V$.
Then
$\varphi_\lambda (v,u) = \tau(v_{(\lambda )} u) = (\delta_\lambda \tau)(v,u)$.
Therefore, without loss of generality (replacing $\varphi_\lambda  $ with
$\varphi_\lambda - \delta_\lambda \tau $) we may assume
$\varphi_\lambda (v, u) = 0$.

Now, consider $g\in \mathfrak g$.
Then by \eqref{eq:Cocycle} we have (for $a=v$, $b=g$, $\lambda = 0$)
\[
\varphi_0(v, gu) - \varphi_\mu(g, (\partial+\alpha )u) = \varphi_\mu(\partial g, u),
\]
that implies $(\alpha+\partial)\varphi_\mu (g,u)=0$. Since $V$
is torsion-free, we have $\varphi_\lambda \equiv 0$.

In Case 3, the same computations with $a=b=v$ imply that
$\varphi_\lambda (v, u)$ do not depend on $\lambda $ for every $u\in U$.
Suppose $V=H\otimes W$, $\{w_i\}_{i\in I }$ is a basis of
the linear space $W$. Then for every $u\in U$ we may write
$\varphi_\lambda (v,u) = \sum_i f_i^u(\partial )w_i$,
$f_i^u \in H$.
Let us represent
$f_i^u(\partial )$ as $(\alpha +\partial )h_i^u(\partial ) + \beta_i^u$, $\beta_i^u\in \Bbbk $.
Then for $\tau(u) = \sum_i h_i^u(\partial )w_i$
we have $(\delta_\lambda\tau)(v,u) = \varphi_\lambda (v,u) - \sum_i \beta_i^u w_i$.
Hence, we may assume without loss of generality that
$f_i^u(\partial )$ are constants from $\Bbbk $.

Suppose
$\varphi_\lambda (g,u) = \sum_i h^{g,u}_i(\partial, \lambda )w_i$,
$h^{g,u}_i \in \Bbbk [\partial, \lambda ]$.
By the same reasons as in  Case 2 we have
\[
 \varphi_0(v, gu) = (\alpha +\partial )\varphi_\mu (g,u),
\]
so $f_i^{gu} = (\alpha +\partial ) h_i^{g,u}(\partial, \mu)$.
Since the left-hand side is constant, we obtain
$f_i^{gu} = h_i^{g,u} = 0$ for all $g\in \mathfrak g$, $u\in U$.
Hence, $\varphi_\lambda \equiv 0$.
\end{proof}

\begin{rem}
 Note that Lemma~\ref{lem:SplitExtMod} does not hold
for $L=\Cur \mathfrak{g}$ for simple finite-dimensional
Lie algebras $\mathfrak g$, see \cite[Proposition 4.4]{CKW1998}.
\end{rem}

\begin{lem}\label{lem:ExistIrreducible}
Let $L$ be as in Lemma~\ref{lem:SplitExtMod}, and let
$V$ be a finite non-trivial torsion-free conformal
module over $L$. Then $V$ contains an irreducible
conformal submodule.
\end{lem}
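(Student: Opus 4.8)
The plan is to prove the statement by induction on the rank $n$ of $V$ as a free $H$-module, the goal at each stage being to exhibit inside $V$ a submodule isomorphic to one of the irreducible modules $M_{\alpha,\Delta,U}$ listed in Theorem~\ref{thm:ConfModules}. Writing $v_{(\lambda)}w=\sum_{j\ge 0}\lambda^j v_{(j)}w$ and $g_{(\lambda)}w=\sum_{j\ge 0}\lambda^j g_{(j)}w$ for $g\in\mathfrak g$, the sesquilinearity relations \eqref{eq:SesquiLinear} give $[v_{(j)},\partial]=v_{(j-1)}$ and $[g_{(j)},\partial]=g_{(j-1)}$; in particular $v_{(0)}$ and $g_{(0)}$ commute with $\partial$, so $N:=v_{(0)}-\partial\cdot\idd$ and $G_g:=g_{(0)}$ are $H$-linear endomorphisms of $V$. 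First I would record, using the module Jacobi identity together with the brackets $[v_{(\lambda)}g]=(\partial+\lambda)g$ and $[g_{(\lambda)}h]=[g,h]$ of $L=\Vir\ltimes\Cur\mathfrak g$, that the $G_g$ furnish a representation of $\mathfrak g$ on $V$ by $H$-endomorphisms commuting with $v_{(1)}$, and that $N$ commutes with every $G_g$ and with $v_{(1)}$. Thus $N$, $v_{(1)}$ and the $\mathfrak g$-action generate a manageable algebra of operators, and on the model $M_{\alpha,\Delta,U}$ one checks directly that $N$ acts by the scalar $\alpha\in\Bbbk$, that $v_{(1)}$ acts by $\Delta$ on the generating subspace, and that $G_g$ acts by the $\mathfrak g$-action on $U$.

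The heart of the argument is to locate a nonzero \emph{weight} subspace inside $V$ on which these operators take the above scalar shape, i.e.\ a finite-dimensional $U_0\subset V$ spanning a free submodule $H\otimes U_0$ with $v_{(j)}U_0=0$ for $j\ge 2$ and $g_{(j)}U_0=0$ for $j\ge 1$. Restricting first to $\Vir$, I would pass to the generic fibre $\tilde V=V\otimes_H K$ over $K=\Bbbk(\partial)$, where $N$ becomes a $K$-linear operator; since $\Bbbk$ is algebraically closed and $N$ is $H$-linear, I would argue that the finiteness of $V$ forces the higher modes $v_{(j)}$, $j\ge 2$, to vanish on a nonzero subspace and pins the eigenvalue of $N$ there to a scalar $\alpha\in\Bbbk$, with $v_{(1)}$ acting by a scalar $\Delta$. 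This produces a $\Vir$-submodule of $V$ isomorphic to some $M_{\alpha,\Delta}$.

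Having fixed such a weight subspace, I would bring in $\Cur\mathfrak g$. The commutation of the $G_g$ with $N$ and with $v_{(1)}$ shows that they preserve the weight subspace, while the relation $v_{(\lambda)}(g_{(\mu)}w)-g_{(\mu)}(v_{(\lambda)}w)=-\mu\,g_{(\lambda+\mu)}w$, obtained from $[v_{(\lambda)}g]=(\partial+\lambda)g$, forces the positive modes $g_{(j)}$, $j\ge 1$, to annihilate it. Hence the weight subspace carries a finite-dimensional $\mathfrak g$-module structure via the $G_g$; choosing an irreducible $\mathfrak g$-submodule $U\subseteq U_0$ by ordinary finite-dimensional Lie theory yields a submodule of $V$ isomorphic to $M_{\alpha,\Delta,U}$, which is irreducible by Theorem~\ref{thm:ConfModules} once one ensures $\Delta\ne 0$ in the degenerate case of trivial $U$. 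The case $\mathfrak g=0$ (i.e.\ $L=\Vir$) is the content of the previous paragraph, and Lemma~\ref{lem:SplitExtMod} is the natural device for discarding trivial summands that may appear while descending from $\tilde V$ back to $V$.

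The step I expect to be the main obstacle is exactly the extraction of the weight subspace: because $V$ is only finitely generated over $H$ and not finite-dimensional over $\Bbbk$, the higher modes do not act nilpotently in any naive sense, and there is no PBW or complete-reducibility shortcut available. Controlling the $\lambda$-degrees, which grow under multiplication by $\partial$, and thereby forcing both the vanishing of the positive modes on a nonzero subspace and the $\Bbbk$-rationality of the weight $\alpha$, is where the finiteness hypothesis must be used most carefully; everything else reduces to bookkeeping with the mode relations and an appeal to the classification in Theorem~\ref{thm:ConfModules}.
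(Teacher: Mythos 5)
There is a genuine gap at exactly the point you flag as ``the main obstacle'': the existence of a nonzero weight subspace $U_0$ with $v_{(j)}U_0=0$ for $j\ge 2$, $g_{(j)}U_0=0$ for $j\ge 1$, and $N=v_{(0)}-\partial$ acting by a scalar $\alpha\in\Bbbk$ rather than by an element of $\overline{\Bbbk(\partial)}$. This is precisely the hard content of the Cheng--Kac classification (the existence of singular vectors in a finite module over the annihilation algebra), and your proposal supplies no argument for it -- only the observation that the finiteness of $V$ over $H$ ``must be used carefully.'' Since everything else in your plan is routine bookkeeping conditional on this step, the proof is not complete. A second, unaddressed difficulty: even granting the weight subspace, the submodule $M_{\alpha,\Delta,U}$ it generates may be a \emph{trivial} rank-one module ($\Delta=0$, $U$ trivial), which is not irreducible; the hypothesis that $V$ is nontrivial does not prevent $V$ from containing trivial submodules, and the vague appeal to Lemma~\ref{lem:SplitExtMod} for ``discarding trivial summands'' does not resolve this.

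The paper's proof avoids the weight-vector analysis entirely and is worth contrasting with your plan. It takes a nontrivial submodule $W\subseteq V$ of minimal rank, intersects all nontrivial submodules of $W$ of full rank to obtain $V_0$, and uses perfectness of $L$ together with $L_{(\lambda)}W\subseteq V_0[\lambda]$ to conclude that $V_0$ is nontrivial and has no nontrivial proper submodules. A maximal proper submodule $U_0\subseteq V_0$ (which exists by Noetherianity) is then forced to be trivial, so $V_0/U_0$ is irreducible, and Lemma~\ref{lem:SplitExtMod} -- the splitting of extensions of an irreducible module by a trivial torsion-free one -- produces a copy of this irreducible module inside $V_0\subseteq V$. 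In other words, the classification of irreducibles (Theorem~\ref{thm:ConfModules}) is quoted as a black box and the only analytic input is the cocycle computation already done in Lemma~\ref{lem:SplitExtMod}; your route would instead require reproving a substantial part of that classification.
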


\begin{proof}
 Choose a nontrivial conformal $L$-submodule $W$ of minimal rank in $V$.
Denote by $\mathcal R$ the set of all those nontrivial conformal $L$-submodules
in $W$ that have the same rank over $H$ as $W$.
For every $U\in \mathcal R$ there exists $h\in H$, $h\ne 0$, such that
$hW\subseteq U$ (since $W/U$ coincides with its torsion).
Denote
\[
 V_0 = \bigcap\limits_{U\in \mathcal R} U,
\]
then $L_{(\lambda )} W \subseteq V_0[\lambda ]$.
Since $L$ is a perfect algebra ($\sum_{\alpha \in \Bbbk }[L_{(\alpha )} L]=L$),
the last expression implies that $V_0$ is a nontrivial
conformal $L$-module. Hence, the rank of $V_0$ coincides with the rank of $W$,
and there are no nontrivial proper conformal $L$-submodules in~$V_0$.

Choose a maximal (proper) conformal $L$-submodule $U_0$ in $V_0$.
By the construction of $V_0$, $U_0$ has to be trivial.
The maximality of $U_0$ implies $M_0=V_0/U_0$ to be an  irreducible
conformal $L$-module.
By Lemma \ref{lem:SplitExtMod} there exists a conformal $L$-submodule
 in $V_0$ isomorphic to $M_0$. This is the desired submodule.
\end{proof}

\begin{lem}\label{lem:Triang}
Let $L$ be as in Lemma~\ref{lem:SplitExtMod},
and let $M$ be a finite conformal module over $L$.
Then there exists a chain of submodules
\[
0=M_{-1}\subset M_0\subset \dots \subset M_n = M,
\]
where $M_k/M_{k-1}$ is either irreducible or
trivial torsion-free or coincides with its torsion
(hence, trivial).
\end{lem}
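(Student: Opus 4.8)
The plan is to argue by induction on the rank of $M$ as an $H$-module, running the induction over \emph{all} finite conformal $L$-modules rather than only over the torsion-free ones. The two inputs I will rely on are Lemma~\ref{lem:ExistIrreducible}, which produces an irreducible submodule inside any nontrivial finite torsion-free module, together with the observation recorded in Section~2 that a torsion element of a conformal module is annihilated by every pseudo-product. The latter shows that the $H$-torsion $T=T(M)$ is a conformal submodule on which $L$ acts trivially, so $T$ is a trivial module coinciding with its own torsion --- exactly one of the three permitted factor types. The classification in Theorem~\ref{thm:ConfModules} will be used only to record that an irreducible module is a free $H$-module of positive rank.

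For the inductive step I would first pass to the torsion-free quotient $\bar M=M/T$. If $\bar M=0$, then $M=T$ is torsion and the chain $0\subset M$ already works (this also serves as the rank-$0$ base case). If $\bar M$ is nonzero but trivial, then $\bar M$ is a trivial torsion-free module and the chain $0\subseteq T\subseteq M$ suffices, its factors being $T$ (torsion) and $\bar M$ (trivial torsion-free). Otherwise $\bar M$ is nontrivial, and Lemma~\ref{lem:ExistIrreducible} supplies an irreducible submodule $\bar N\subseteq\bar M$. I would let $N\subseteq M$ be the preimage of $\bar N$ under the projection, so that $T\subseteq N$ and $N/T\cong\bar N$ is irreducible. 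Since $\bar N$ is free of positive rank, $N$ has positive rank, and therefore $M/N\cong\bar M/\bar N$ has strictly smaller rank than $M$. Applying the induction hypothesis to $M/N$ and pulling the resulting chain back through the quotient map yields a chain from $N$ up to $M$ with factors of the required types; prefixing it with $0\subseteq T\subseteq N$ (omitting the first inclusion when $T=0$), whose factors are $T$ (torsion) and $N/T\cong\bar N$ (irreducible), completes the chain for $M$.

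The one genuinely essential point --- and the reason the induction must be framed over all finite modules --- is that a quotient of a torsion-free module need not remain torsion-free: the normal series $\partial^{k}M_1$ from the introductory example shows that factoring out a submodule can create fresh torsion. Consequently I cannot restrict attention to torsion-free modules and merely peel off irreducibles; instead torsion has to be split off anew at each stage, which is precisely the effect of replacing $M$ by $\bar M=M/T$. Once this is arranged, the only quantitative thing to verify is that the rank strictly decreases, and this is immediate because every irreducible module is free of positive rank. The remaining checks --- that $T$ and the preimage $N$ are honest conformal submodules, and that the pulled-back chain has the claimed factors --- are routine and use nothing beyond the definitions; note in particular that Lemma~\ref{lem:SplitExtMod} enters here only indirectly, through its use inside the proof of Lemma~\ref{lem:ExistIrreducible}.
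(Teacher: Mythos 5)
Your proof is correct and follows essentially the same route as the paper, which simply declares the lemma an immediate corollary of Lemma~\ref{lem:ExistIrreducible}: you have spelled out the induction on rank (splitting off the torsion submodule, then an irreducible via Lemma~\ref{lem:ExistIrreducible}, then passing to the quotient) that the paper leaves implicit. Your closing remark that the quotients need not stay torsion-free, so torsion must be peeled off at every stage, is exactly the point the paper glosses over, and your handling of it is sound.
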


\begin{proof}
This is an immediate corollary of Lemma \ref{lem:ExistIrreducible}.
Note that if $M$ is nontrivial torsion-free conformal $L$-module
then $M_0$ has to be irreducible.
\end{proof}

\begin{lem}\label{lem:Any*Irred=Triv}
Assume $C$ is a torsion-free conformal algebra which is a conformal
module over $\Vir =Hv$ such that $D_\lambda = (v_{\lambda } \cdot )$
is a conformal derivation of $C$.
Suppose $M$ is a conformal $\Vir $-submodule of $C$ isomorphic to $M_{\alpha,\Delta}$
for some $\alpha,\Delta \in \Bbbk $, and $N$ is an arbitrary finite conformal $\Vir $-submodule
of $C$. If $(N_{(\lambda )} M)$ falls into the kernel of $D_0$
then $(N_{(\lambda )} M) =0$ in $C$.
\end{lem}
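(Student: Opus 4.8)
My plan is to prove the stronger pointwise statement that $(a_{(\lambda)} u)=0$ for every $a\in N$, where I write $M=Hu$ with $v_{(\lambda)}u=(\alpha+\partial+\Delta\lambda)u$; this is equivalent to $(N_{(\lambda)}M)=0$. Set $P_a(\lambda)=(a_{(\lambda)}u)\in C[\lambda]$. The starting point is the conformal derivation identity applied to $D_0=(v_{(0)}\cdot)$ and the product $(a_{(\lambda)}u)$, namely $D_0(a_{(\lambda)}u)=((D_0a)_{(\lambda)}u)+(a_{(\lambda)}D_0u)$. Since $D_0u=(\alpha+\partial)u$, sesquilinearity \eqref{eq:SesquiLinear} rewrites the last term as $(\alpha+\lambda+\partial)P_a$, while the hypothesis $(N_{(\lambda)}M)\subseteq\ker D_0$ makes the left-hand side vanish. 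This gives the key recursion $P_{D_0a}=-(\alpha+\lambda+\partial)P_a$, valid for all $a\in N$.

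I would then contrast this with Lemma~\ref{lem:Triv*Irred=0}, where $N$ is trivial, so $D_0a=0$ and the recursion degenerates to $(\alpha+\lambda+\partial)P_a=0$, forcing $P_a=0$ by torsion-freeness. In the present generality $D_0a$ need not vanish, and the recursion instead says that each application of $D_0$ raises the $\lambda$-degree of $P_a$ by exactly one (its top coefficient is just $-1$ times that of $P_a$). Hence the degrees of $P_{D_0^ka}$ grow without bound, and the naive degree count of Lemma~\ref{lem:Triv*Irred=0} produces no contradiction on its own. This is the main obstacle I anticipate, and it is precisely the place where the \emph{finiteness} of $N$ has to be used.

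To overcome it, I would observe that $D_0=(v_{(0)}\cdot)$ is an $H$-linear endomorphism of $N$ (from module sesquilinearity at $\lambda=0$) and that the assignment $a\mapsto P_a$ intertwines the $H$-action with multiplication by $-\lambda$, since $P_{\partial a}=-\lambda P_a$. Because $N$ is finitely generated over $H$, the determinant trick (Cayley--Hamilton for finitely generated modules) supplies a monic $\chi(t)=t^m+\sum_{i<m}h_i(\partial)\,t^i\in H[t]$ with $\chi(D_0)=0$ on $N$. Passing $\chi(D_0)a=0$ through the two intertwining relations converts it into $\bigl[S^m+\sum_{i<m}h_i(-\lambda)\,S^i\bigr]P_a=0$ in $C[\lambda]$, where $S=-(\alpha+\lambda+\partial)$. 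Evaluating at any scalar $\lambda_0\in\Bbbk$, the bracketed operator becomes a polynomial in $\partial$ of degree exactly $m$ (its leading term coming from $S_0^m$, $S_0=-(\alpha+\lambda_0+\partial)$), hence a nonzero element of $H$; torsion-freeness of $C$ then forces $P_a(\lambda_0)=0$. Since $\Bbbk$ is infinite, $P_a\equiv0$ for every $a\in N$, which finishes the argument. I expect the only delicate points to be checking that this substituted operator is genuinely nonzero and that evaluation at $\lambda_0$ commutes with $S$, both of which are routine.
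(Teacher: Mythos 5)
Your argument is correct, but it takes a genuinely different route from the paper's. The paper filters $N$ by the series of Lemma~\ref{lem:Triang} (which rests on Lemmas~\ref{lem:SplitExtMod} and~\ref{lem:ExistIrreducible}), takes the minimal layer $N_k$ with $(N_k{}_{(\lambda)}M)\ne 0$, and uses the explicit action of $D_0$ on a generator of $N_k/N_{k-1}$ --- multiplication by $0$ or by $\beta+\partial$ modulo $N_{k-1}$ --- so that the derivation identity produces a factor $(\alpha+\partial+\lambda)$ or $(\alpha+\beta+\partial)$ killing $(a_{(\lambda)}u)$ by torsion-freeness. You replace that structure-theoretic input with the single observation that $D_0$ is an $H$-linear endomorphism of the finitely generated $H$-module $N$, hence satisfies a monic $\chi(t)\in H[t]$ by Cayley--Hamilton; pushing $\chi(D_0)a=0$ through the intertwining relations $P_{\partial a}=-\lambda P_a$ and $P_{D_0a}=-(\alpha+\lambda+\partial)P_a$ produces a polynomial in $\Bbbk[\partial,\lambda]$ with leading $\partial$-term $(-1)^m\partial^m$ annihilating $P_a$, and torsion-freeness finishes (your specialization at infinitely many $\lambda_0$ is fine; alternatively $C[\lambda]$ is already $\Bbbk[\partial,\lambda]$-torsion-free). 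Your version is more self-contained --- it does not use the classification of irreducible $\Vir$-modules or the splitting Lemma~\ref{lem:SplitExtMod}, only finite generation of $N$ over $H$ --- whereas the paper's version reuses machinery it needs elsewhere anyway and makes the vanishing visible layer by layer. Both proofs correctly identify finiteness of $N$ as the essential hypothesis beyond Lemma~\ref{lem:Triv*Irred=0}.
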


\begin{proof}
Let $M=Hu$, $D_0u = (\alpha +\partial )u$.
Consider the triangular decomposition from Lemma \ref{lem:Triang} for $N$:
\[
 0=N_{-1}\subset N_0\subset \dots \subset N_r = N.
\]
Assume $k\ge 0$ is the minimal index such that $(N_k{}_{(\lambda )} M)\ne 0$.

If $N_k/N_{k-1}$ is a trivial $\Vir$-module then
for every $a\in N_k$ we have
$D_0a \in N_{k-1}$, so
\[
 0 = D_0(a_{(\lambda )}u) = a_{(\lambda )} D_0u = (\alpha+\partial +\lambda )(a_{(\lambda )} u).
\]
Since $C$ is torsion-free, $(a_{(\lambda )} u)=0$.

If $N_k/N_{k-1}$ is an irreducible $\Vir$-module isomorphic to $M_{\beta, \delta}$
then $N_k = Ha + N_{k-1}$, where $D_0a \in (\beta +\partial )a + N_{k-1}$.
In this case,
\[
 0=D_0(a_{(\lambda )}u) = (D_0a_{(\lambda )} u) + a_{(\lambda)} D_0u = (\beta+\alpha +\partial)(a_{(\lambda )}u),
\]
so $a_{(\lambda )}u =0$.
\end{proof}

\section{Finite faithful representation}

Suppose $L$ is a finite torsion-free conformal Lie algebra
with the maximal solvable ideal $R$.
Assume $R$ splits in $L$, i.e., $L=L_0\ltimes R$, where $L_0$ is semisimple.
Denote by $Z(L)$ the center of $L$.

First, consider the case when $R$ is nilpotent.

\begin{prop}\label{prop:IdealExists}
If $R$ is nilpotent and $L_0$ contains a summand $L_1$
isomorphic either to $\Vir $ or
to $\Vir\ltimes \Cur\mathfrak g$
($\mathfrak g$ is a simple finite-dimensional Lie algebra)
such that $R$ is a nontrivial conformal $L_1$-module
then $L$ contains a nonzero ideal $I$ such that $I\cap Z(L)=0$
and $[R_{(\lambda )} I] =0$.
\end{prop}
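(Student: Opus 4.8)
The plan is to find the ideal $I$ inside the centralizer $C_L(R)=\{x\in L:[R_{(\lambda)}x]=0\}$, which is itself a nonzero ideal of $L$ (this follows from the conformal Jacobi identity together with $R$ being an ideal), and to use the Virasoro element $v\in L_1$ to keep $I$ away from $Z(L)$. Note that $Z(L)\subseteq\ker D_0$, where $D_0=[v_{(0)}\cdot]$, since central elements are annihilated by $v$. First I would exploit the nilpotency of $R$: its lower central series $R=R^{(1)}\supseteq R^{(2)}\supseteq\cdots$ consists of ideals of $L$, hence of conformal $L_1$-submodules. As $R$ is a nontrivial $L_1$-module by hypothesis, let $k$ be the largest index for which $R^{(k)}$ is a nontrivial $L_1$-module; then $R^{(k+1)}$ is trivial over $L_1$, so in particular $R^{(k+1)}\subseteq\ker D_0$.

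The crucial step is to produce a nontrivial irreducible $L_1$-submodule that $R$ annihilates. By Lemma~\ref{lem:ExistIrreducible}, the nontrivial module $R^{(k)}$ contains an irreducible $L_1$-submodule $M$; by Theorem~\ref{thm:ConfModules} its generators $u$ satisfy $D_0u=(\alpha+\partial)u$ for a fixed $\alpha\in\Bbbk$. Since $M\subseteq R^{(k)}$ we have $[R_{(\lambda)}M]\subseteq[R_{(\lambda)}R^{(k)}]=R^{(k+1)}\subseteq\ker D_0$, so Lemma~\ref{lem:Any*Irred=Triv}---applied to each rank-one $\Vir$-summand of $M$ in the $\Vir\ltimes\Cur\mathfrak g$ case---gives $[R_{(\lambda)}M]=0$, that is, $M\subseteq C_L(R)$. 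I then set $I$ equal to the $L_0$-submodule of $C_L(R)$ generated by $M$. Because $R$ acts trivially on $C_L(R)$, this $I$ is exactly the ideal of $L$ generated by $M$; it is nonzero and satisfies $[R_{(\lambda)}I]=0$.

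It remains to verify $I\cap Z(L)=0$, which is the heart of the argument. Let $E\subseteq C_L(R)$ be the generalized eigenspace of the $H$-linear operator $D_0$ for the eigenvalue $\alpha+\partial\in H$. One checks that $E$ is stable under all of $L_0$: each $H$-generator $w$ of a summand of $L_0$ satisfies either $D_0w=\partial w$ (when $w\in L_1$) or $D_0w=0$ (when $w$ lies in another summand, which commutes with $v$), and in both cases a short computation on the coefficients of $[w_{(\mu)}c]$ shows that $[w_{(\mu)}\cdot]$ maps $E$ into $E[\mu]$. Since $M\subseteq E$, it follows that $I\subseteq E$. Finally, if $c\in E\cap\ker D_0$, then expanding $(D_0-(\alpha+\partial))^Nc=0$ and using $D_0c=0$ leaves $(\alpha+\partial)^Nc=0$, so $c=0$ because $C_L(R)$ is torsion-free; thus $E\cap\ker D_0=0$. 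As $I\subseteq E$ and $Z(L)\subseteq\ker D_0$, we conclude $I\cap Z(L)=0$. The main obstacle is precisely this center-avoidance: one must show that the generalized $D_0$-eigenspace carrying $M$ is invariant under the entire semisimple part $L_0$, not merely under $L_1$, and that it meets $\ker D_0$ trivially---this is where torsion-freeness and the clean derivation relations $D_0v=\partial v$ (and $D_0g=\partial g$ for $g\in\mathfrak g$) are essential.
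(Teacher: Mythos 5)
Your proof is correct, and while its first half coincides with the paper's (descend the lower central series to the last nontrivial $L_1$-term $R^m$, extract an irreducible $L_1$-submodule $M$ via Lemma~\ref{lem:ExistIrreducible}, and kill $[R_{(\lambda)}M]$ via Lemma~\ref{lem:Any*Irred=Triv} because the products land in the trivial module $R^{m+1}\subseteq\ker D_0$), the second half takes a genuinely different route. The paper shows that $M$ \emph{itself} is already an ideal of $L$: for every other semisimple summand $L_j$ ($j\ne 1$), which is a trivial $\Vir$-module, Lemma~\ref{lem:Triv*Irred=0} forces $[L_j{}_{(\lambda)}M]=0$, so no enlargement is needed, and $M\cap Z(L)=0$ is then immediate because $D_0$ acts on $M\simeq M_{\alpha,\Delta,U}$ as multiplication by $\alpha+\partial$, which is injective by torsion-freeness. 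You instead pass to the $L_0$-submodule of the centralizer $C_L(R)$ generated by $M$ and control it through the generalized $(\alpha+\partial)$-eigenspace $E$ of $D_0$; your stability computation is sound (for $w$ with $D_0w=\partial w$ the operator $D_0-(\alpha+\partial)$ commutes exactly with $[w_{(\mu)}\cdot]$, and for $w$ with $D_0w=0$ one gets $T[w_{(\mu)}c]=[w_{(\mu)}Tc]+\mu[w_{(\mu)}c]$, after which a degree-in-$\mu$ comparison puts all coefficients back in $E$ --- indeed this case is essentially Lemma~\ref{lem:Triv*Irred=0} in disguise), and $E\cap\ker D_0=0$ follows from torsion-freeness as you say. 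What each approach buys: the paper's is shorter and identifies the ideal as the irreducible module itself, while yours avoids invoking Lemma~\ref{lem:Triv*Irred=0} for the cross-summand products at the cost of the eigenspace machinery --- machinery that is in fact close in spirit to the weight/degree argument the paper deploys later in Proposition~\ref{prop:IdealExists2}, so your method would generalize naturally there.
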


\begin{proof}
Consider the sequence of ideals
\[
 R=R^1\supset R^2\supset \dots \supset R^{n-1}\supset R^n=0,
\]
where
$ R^{l+1} = \sum\limits_{\alpha \in \Bbbk }  [ R_{(\alpha)} R^l ] $.
Choose the maximal $m$ such that $R^m$ is a nontrivial
$L_1$-module.
By Lemma \ref{lem:ExistIrreducible} there exists an irreducible
conformal $L_1$-submodule $M_0$ in $R^m$.

It turns out that $M_0$ is the desired ideal in $L$.
Let us consider in details the case when
$L_1=\Vir\ltimes \Cur \mathfrak g$
and $M_0=M_{\alpha,\Delta, U}$
as in Theorem \ref{thm:ConfModules}(iii).
By $v$ we denote the canonical Virasoro element of $L_1$.
It is enough to show
that $[L_k {}_{(\lambda )} M_0]\subseteq M_0[\lambda ]$
for all $k>1$ and $[R_{(\lambda )}M_0]\subseteq M_0[\lambda ]$.

Let $u\in U$. Then
$Hu$ is a conformal $\Vir$-submodule in $L$
which is isomorphic to $M_{\alpha, \Delta}$.
For $k>1$, the summand $L_k$ is a trivial $\Vir$-module.
Hence, by Lemma~\ref{lem:Triv*Irred=0} we have $[L_k{}_{(\lambda )} M_0] = 0$.

Finally, note that $[R_{(\lambda) } M_0]\subseteq R^{m+1}[\lambda ]$.
By the choice of $m$, $R^{m+1}$ is a trivial $L_1$-module. Therefore,
we may apply Lemma \ref{lem:Any*Irred=Triv} to conclude
$[R_{\lambda }M_0]=0$.

We have found an ideal $I=M_0\ne 0$ in $L$ which has zero intersection with
the center $Z$ of $L$ since $[v_{(0)}\cdot ]$ has no kernel in $M_0$.
\end{proof}

Now, let us expand the results of Proposition
\ref{prop:IdealExists}
to the more general case.

The conformal version of the Lie theorem
for solvable Lie conformal algebras \cite{DK1998}
implies, in particular,
that $R'=R^2$ is nilpotent.

\begin{prop}\label{prop:IdealExists2}
If $L_0$ contains a summand $L_1$ isomorphic either to $\Vir $ or
to $\Vir\ltimes \Cur\mathfrak g$ ($\mathfrak g$ is a
simple finite-dimensional Lie algebra)
such that $R$ is a nontrivial conformal $L_1$-module
then $L$ contains a nonzero ideal $I$ such that $I\cap Z(L)=0$.
\end{prop}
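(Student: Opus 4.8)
The plan is to reduce to the nilpotent case already settled in Proposition~\ref{prop:IdealExists}, exploiting the fact (quoted just above) that $R^2$ is nilpotent, and to organize the argument around the Virasoro element $v\in L_1$ and its conformal derivation $D_\lambda=[v_{(\lambda)}\cdot]$ of $L$. Since every element of $Z(L)$ is annihilated by $v$, it lies in the kernel of the $H$-linear operator $D_0=[v_{(0)}\cdot]$; hence in order to produce a nonzero ideal $I$ with $I\cap Z(L)=0$ it is enough to find a nonzero $L$-ideal $I\subseteq R$ on which $D_0$ has no kernel. I would split into two cases according to whether $R^2$ is a trivial or a nontrivial conformal $L_1$-module.

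Suppose first that $R^2$ is a \emph{trivial} $L_1$-module. By Lemma~\ref{lem:ExistIrreducible} the nontrivial module $R$ contains an irreducible $L_1$-submodule $M_0$; being irreducible it meets the trivial module $R^2$ only in $0$, and one of its Virasoro lines $Hu\subseteq M_0$ is isomorphic to some $M_{\alpha,\Delta}$ (as in Theorem~\ref{thm:ConfModules}). Now $[R_{(\lambda)}M_0]\subseteq[R_{(\lambda)}R]=R^2\subseteq\ker D_0$, so Lemma~\ref{lem:Any*Irred=Triv} forces $[R_{(\lambda)}M_0]=0$; together with $[L_k{}_{(\lambda)}M_0]=0$ for the remaining summands (Lemma~\ref{lem:Triv*Irred=0}, since each such $L_k$ is a trivial $\Vir$-module) this shows $I=M_0$ is an $L$-ideal. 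On $M_0$ the operator $D_0$ acts as multiplication by $\alpha+\partial$, which has no kernel, so $I\cap Z(L)=0$ and this case is complete.

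The harder case is when $R^2$ is a \emph{nontrivial} $L_1$-module. Here I would apply Proposition~\ref{prop:IdealExists} to the subalgebra $L_0\ltimes R^2$, whose radical $R^2$ is nilpotent; this yields an irreducible $L_1$-submodule $M_0\subseteq R^2$ which is an ideal of $L_0\ltimes R^2$ and, crucially, satisfies $[R^2{}_{(\lambda)}M_0]=0$. The difficulty is that $M_0$ need not be invariant under all of $R$: this is exactly the point where the conformal analogue of property~(A3) fails, since $R$ is only solvable and its action on $R^2$ need not be nilpotent. To repair this I would let $I$ be the smallest $R$-invariant $H$-submodule of $R^2$ containing $M_0$; it remains inside $R^2$, and from $[L_k{}_{(\lambda)}M_0]=0$ and $[L_1{}_{(\lambda)}M_0]\subseteq M_0$ one propagates $L_0$-invariance through the iterated $R$-brackets by the Jacobi identity, so that $I$ is a genuine $L$-ideal.

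The main obstacle, and the heart of the argument, is then to show $I\cap Z(L)=0$, i.e.\ that $D_0$ has no kernel on $I$, even though $[R_{(\lambda)}M_0]$ may be nonzero. I expect to handle this by a weight bookkeeping for the $H$-linear derivation $D_0$: on $M_0$ it acts as $\alpha+\partial$, and by Lemma~\ref{lem:Triang} the generalized eigenvalues of $D_0$ on $R$ are either $0$ or of the form $\beta+\partial$. A direct computation with the sesquilinearity relations \eqref{eq:SesquiLinear} shows that bracketing an eigenvector of eigenvalue $c+\partial$ with such an element again produces eigenvalue $(\beta+c)+\partial$ or $c+\partial$, the occurrences of $\partial$ coming from the factors and the stray $\lambda$-variable cancelling; in every case the result is a monic degree-one polynomial in $\partial$, never the zero polynomial. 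Hence every generalized $D_0$-eigenvalue occurring in $I$ is nonzero, so $\ker D_0\cap I=0$ and therefore $I\cap Z(L)=0$. This eigenvalue computation is the delicate step, and it is precisely what allows the hypothesis of nilpotency of $R$ to be dropped.
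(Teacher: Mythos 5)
Your overall strategy coincides with the paper's: the same dichotomy on whether $R'=R^2$ is a trivial or a nontrivial conformal $L_1$-module; in the first case the same combination of Lemma~\ref{lem:Triv*Irred=0} and Lemma~\ref{lem:Any*Irred=Triv} (your Case~1 is complete as written); and in the second case the same reduction, namely applying Proposition~\ref{prop:IdealExists} to $L_0\ltimes R'$ to get $M_0\subseteq R'$ with $[R'_{(\lambda)}M_0]=0$ and then passing to the ideal $I$ of $L$ generated by $M_0$ under the action of $R$, with the goal of showing that $D_0=[v_{(0)}\cdot]$ has zero kernel on $I$ (which suffices, since $Z(L)\subseteq\mathrm{Ker}\,D_0$).

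The step you yourself flag as delicate is, however, exactly where the content of the paper's Case~2 lies, and as stated your argument has a gap. The ideal $I$ is spanned over $H$ by the iterated brackets $[a_n{}_{(\alpha_{1,n})}\dots[a_0{}_{(\alpha_{k_0,0})}u]\dots]$ with the parameters $\alpha_{l,i}$ ranging over all of $\Bbbk$; this is an infinite spanning family of a finite-rank $H$-module, so these words satisfy a great many $H$-linear relations, and ``the generalized $D_0$-eigenvalues occurring in $I$'' is not a notion one can read off from the action of $D_0$ on individual spanning words. What the word-by-word computation actually yields is the triangularity statement \eqref{eq:DerWAction}: $D_0w=(\partial+\alpha(w))w+z'$ with $\mathrm{wt}\,z'<\mathrm{wt}\,w$. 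To convert this into $\mathrm{Ker}\,D_0\cap I=0$ one must work with \emph{all} presentations of a putative $0\ne z_0\in Z(L)\cap I$, choose one of minimal top weight and with the minimal number of top-weight terms, and check that $(\partial+\alpha(w_1))z_0$ is a nonzero element of $Z(L)\cap I$ of smaller weight or degree --- a well-founded induction that is missing from your sketch and constitutes the bulk of the paper's proof. A minor inaccuracy in the same place: when you bracket with an element on which $D_0$ acts trivially modulo lower terms, the resulting diagonal coefficient is $c+\mu+\partial$ (the parameter $\mu$ of that bracket does \emph{not} cancel, since only the inner factor contributes a $\partial$ to be pushed outward), not $c+\partial$; this does not harm the conclusion, as the coefficient is still monic linear in $\partial$ and hence injective on a torsion-free module.
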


\begin{proof}

{\sc Case 1.} Assume $R'$ is a trivial $L_1$-module.
By Lemma \ref{lem:ExistIrreducible}, $R$ contains an
irreducible conformal $L_1$-submodule $M_0=M_{\alpha, \Delta, U}$.
For each $0\ne u\in U$, $Hu$ is a conformal module over $\Vir $
isomorphic to $M_{\alpha, \Delta}$.
Then $[L_i {}_{(\lambda )} Hu] = 0$
by Lemma \ref{lem:Triv*Irred=0}. Moreover,
$[R_{(\lambda )} Hu] \subseteq R'[\lambda ]$,
so we may apply Lemma \ref{lem:Any*Irred=Triv}
to conclude $[R_{\lambda } M_0] =0$.
Hence, $I=M_0$ is an ideal of $L$ which has no
intersection with $Z(L)$ since
$\mathrm{Ker}\,[v_{(0)}\cdot ]\cap M_0 = 0$.

{\sc Case 2.}
Assume $R'$ is a nontrivial $L_1$-module.
Then by Proposition \ref{prop:IdealExists}
there exists an ideal $M_0$ of $L_0\ltimes R'$
such that $[R'_{(\lambda )} M_0] = 0$.
As an $L_1$ module, $M_0$ is isomorphic to $M_{\alpha,\Delta,U}$,
where $U$ is either 1-dimensional or an irreducible
$\mathfrak g$-module.
We are going to prove that the ideal generated by $M_0$
in the entire algebra $L$  has trivial intersection with its center.

Consider $A = R/R'$ as a conformal module over $\Vir=Hv \subseteq L_1$
with respect to the induced regular action.
By Lemma \ref{lem:Triang}, there exists a sequence of conformal
$\Vir$-modules
$0=A_{-1}\subset A_0\subset \dots \subset A_n = A$,
where $A_k/A_{k-1}$ is either isomorphic to $M_{\alpha_k, \Delta_k}$
or trivial. Define an index $I(k)$, $k=0,\dots, n$, in the following way: $I(k)=1$ if $A_k/A_{k-1}\simeq M_{\alpha_k, \Delta_k}$
and $I(k)=0$ if $A_k/A_{k-1}$ is trivial (either torsion-free or coincides with its torsion).

Suppose $\bar a_k$, $k=0,\dots, n$, are the generators of
$A_k/A_{k-1}$ over $H$, and choose the corresponding pre-images $a_k\in R$.
Then the set $\{a_k+R' \mid k=0,\dots , n\}$ generates $R/R'$ over $H$,  and
\[
 D_0 a_k \in I(k)(\partial +\alpha _k)a_k + \sum\limits_{0\le i<k} f_{ik}(\partial )a_i + R',
\]
where $D_0 = [v_{(0)} \cdot]$
(we do not define what is $\alpha _k$ when $I(k)=0$).

Denote
\begin{multline}\label{eq:PrincipalWords}
w_{\alpha_{1,0},\dots, \alpha_{k_0,0},\dots ,\alpha_{1,n},\dots ,\alpha_{k_n,n}}^{k_0,\dots, k_n} \\
 =
 [a_n{}_{(\alpha_{1,n})}\dots [a_n{}_{(\alpha_{k_n,n})}
 [a_{n-1}{}_{(\alpha_{1,n-1})}
\dots
[a_0{}_{(\alpha_{1,0})}\dots  [a_0{}_{(\alpha_{k_0,0})} u]\dots ]]]],
\end{multline}
where $u\in U$, $\alpha_{l,i}\in \Bbbk $, $k_i\ge 0$.
Let $W(a_0,\dots, a_n)\subset R$ be the set of all
$w=w_{\alpha_{1,0},\dots, \alpha_{k_0,0},\dots ,\alpha_{1,n},\dots ,\alpha_{k_n,n}}^{k_0,\dots, k_n}$.
It is clear that the $H$-linear span $I$
of all elements from $W(a_0,\dots, a_n)$
is the ideal in $L$ generated by $M_0$.
Indeed, since $R/R'$ is an Abelian Lie conformal algebra and $[R'_{(\lambda )} u] =0$,
we have $[a_k{}_{(\alpha)} w]\in I$ for all $w\in I$ (one may re-arrange
the operators $[a_i{}_{(\alpha_{l,i})} \cdot ]$ in the desired way).
Obviously, $I$ is closed under the multiplication with $L_i$ for $i=1,\dots, s$,
but, in general, $[L_i{}_{(\lambda )} I]\ne 0$ in contrast to Proposition \ref{prop:IdealExists}.

Define the {\em weight\/} $\mathrm{wt}\,w $ of an expression $w$
of the form \eqref{eq:PrincipalWords}
as the $(n+2)$-tuple $(k_0+\dots +k_n, k_n,\dots, k_0)$,
and let the weights be the lexicographically ordered.

 Assume $Z(L)\cap I \ne 0$.
For every $z\in Z(L)\cap I$, $z\ne 0$,
there exists its presentation
\begin{equation}\label{eq:q-NormalForm}
 z=h_1(\partial )w_1 + \dots + h_m(\partial )w_m,\quad h_i\ne 0, \quad w_m\in W(a_0,\dots, a_n)  ;
 \end{equation}
such that $\max_i \mathrm{wt}\,w_i$ is minimal among all presentations of $z$ in the form
\eqref{eq:q-NormalForm}.
Denote such a weight by $\mathrm{wt}\,z$.
Then, consider those presentations of $z$ in the form \eqref{eq:q-NormalForm}
with $\max_i \mathrm{wt}\,w_i = \mathrm{wt}\,z$ and choose one with
minimal number $r$ of $w_i$s with $\mathrm{wt}\,w_i = \mathrm{wt}\,z$
(say, $\mathrm{wt}\,w_1 = \dots = \mathrm{wt}\,w_r = \mathrm{wt}\,z$,
$\mathrm{wt}\,w_i< \mathrm{wt}\,z $ for $i=r+1, \dots , m$).
Denote this number $r$ by $\deg z$.
Both
$\mathrm{wt}\,z$ and $\deg z$ are well-defined: They depend only in $z\in Z(L)\cap I$, $z\ne 0$.

Straightforward computation shows
\begin{equation}\label{eq:DerWAction}
D_0  w = (\partial + \alpha(w)) w+ z', \quad \alpha(w)\in \Bbbk ,\ \mathrm{wt}\,z'<\mathrm{wt}\, w
\end{equation}
for $w \in W(a_0,\dots, a_n)$.

We may choose $0\ne z_0\in Z(L)\cap I$ such that:
(1) $\mathrm{wt}\,z_0$ is minimal among all $z\in Z(L)\cap I$, $z\ne 0$;
(2) $r=\deg z_0$ is minimal possible among all $z$ with minimal $\mathrm{wt}\,z$.

Then
\[
 z_0 = h_1(\partial )w_1 + \dots + h_r(\partial )w_r + z',
\]
where $\mathrm{wt}\,w_1 = \dots = \mathrm{wt}\,w_r = \mathrm{wt}\,z_0$
and $\mathrm{wt}\,z' < \mathrm{wt}\,z_0 $.
But \eqref{eq:DerWAction} implies
\[
0 = D_0 z_0 = h_1(\partial )(\partial+\alpha(w_1))w_1 + \dots + h_r(\partial )(\partial+\alpha(w_r))w_r + z'',
\]
where $\mathrm{wt}\,z'' < \mathrm{wt}\,z_0$.
Hence,
\[
(\partial+\alpha(w_1))z_0\in (Z(L)\cap I)\setminus \{0\}
\]
has either smaller weight or smaller degree than $z_0$.
The contradiction obtained proves $I\cap Z(L)=0$.
\end{proof}

\begin{thm}
 Let $L$ be a finite torsion-free conformal Lie algebra
with a splitting solvable radical $R$.
Then $L$ has a finite faithful conformal representation.
\end{thm}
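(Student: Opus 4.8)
The plan is to argue by induction on the rank of $L$ as an $H$-module, building a faithful representation as a direct sum of two representations whose kernels meet only in $0$. The first ingredient is the adjoint representation $a \mapsto [a_{(\lambda)}\cdot]$ of $L$ on itself; this is a finite torsion-free conformal module whose kernel is exactly the center $Z(L)$, by anti-commutativity of the bracket. The second ingredient is a finite faithful representation of a proper quotient $L/T$ obtained from the induction hypothesis; pulled back along $L\to L/T$ it is a finite torsion-free conformal representation of $L$ with kernel $T$. Hence, if $T$ is a nonzero ideal with $T\cap Z(L)=0$, the direct sum of these two representations is finite, torsion-free, and faithful, since its kernel is $\ker(\mathrm{adjoint})\cap\ker(\mathrm{pullback})=Z(L)\cap T=0$. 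So the whole problem reduces to producing, whenever $Z(L)\neq 0$, a nonzero ideal disjoint from the center together with a suitable torsion-free quotient to which induction applies.

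To produce such an ideal I would split into cases according to how the Virasoro-type summands of $L_0$ act on $R$, writing $L_0=C_1\oplus\dots\oplus C_k$ as in Theorem \ref{thm:ConfAlgebras}. If some summand $L_1$ of type $\Vir$ or $\Vir\ltimes\Cur\mathfrak g$ satisfies $[L_1{}_{(\lambda)} R]=0$, then $L_1$ commutes with $R$ and with the remaining summands, so $L=L_1\oplus L'$ splits as a direct sum of conformal algebras with $L'$ of strictly smaller rank; here $L_1$ has a finite faithful representation by Theorem \ref{thm:ConfModules} and $L'$ one by induction, and I take the direct sum. If instead every Virasoro-type summand acts nontrivially on $R$ and at least one exists, then Proposition \ref{prop:IdealExists2} delivers a nonzero ideal $I$ with $I\cap Z(L)=0$. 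Finally, if $L_0$ has no Virasoro-type summand at all, then $L$ is of classical type and a finite faithful representation exists by \cite{Kol2011}.

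It remains, in the middle case, to turn $I$ into an ideal with torsion-free quotient. The naive quotient $L/I$ may acquire $H$-torsion, and by the Remark a torsion-bearing conformal algebra has no faithful representation, so I would pass to the saturation $T=\{x\in L : hx\in I \text{ for some } 0\neq h\in H\}$. The identity $[y_{(\lambda)}(h(\partial)x)]=h(\lambda+\partial)[y_{(\lambda)}x]$ shows that $T$ is again a conformal ideal and that $L/T$ is torsion-free of strictly smaller rank. Moreover $L/T$ retains a splitting radical: its radical is the image of $R$, complemented by the image of $L_0$, which is semisimple by Theorem \ref{thm:ConfAlgebras} since ideals of a semisimple conformal algebra have semisimple quotients. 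Finally $T\cap Z(L)=0$, because if $z$ is central with $hz\in I$ then $hz$ is again central, whence $hz\in I\cap Z(L)=0$ and $z=0$ by torsion-freeness of $L$. Induction then gives a finite faithful representation of $L/T$, and the direct sum with the adjoint representation finishes the argument.

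I expect the main obstacle to be precisely this last reduction: keeping the process inside the class of finite torsion-free algebras with splitting radical. Torsion is readily created on passage to quotients and is fatal to faithfulness, so the saturation device and the verification that the Levi splitting descends to $L/T$ are the technical heart of the proof, whereas the existence of a center-avoiding ideal is already supplied by Propositions \ref{prop:IdealExists} and \ref{prop:IdealExists2}.
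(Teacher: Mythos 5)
Your proposal is correct and follows essentially the same route as the paper: reduce to producing a nonzero ideal avoiding the center via Propositions \ref{prop:IdealExists} and \ref{prop:IdealExists2}, saturate that ideal so the quotient stays torsion-free, verify the Levi splitting descends, and conclude with a direct sum of two representations whose kernels intersect trivially. The only (harmless, arguably cleaner) deviations are that you use the adjoint representation in place of a faithful representation of $L/Z(L)$ pulled back to $L$, and that you split off trivially-acting Virasoro-type summands one at a time rather than all at once as in the paper.
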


\begin{proof}
Suppose $L$ is a counterexample of minimal rank over $H$.
Then $Z(L)\ne 0$: Otherwise, the regular representation is faithful.

By Theorem  \ref{thm:ConfAlgebras}(ii),
$L_0 = L_1\oplus \dots \oplus L_k \oplus L_{k+1} \oplus \dots \oplus L_s$,
where $L_i$
are either $\Cur\mathfrak g$ or $\Vir $ or
$\Vir\ltimes \Cur\mathfrak g$.

Let $L_1,\dots , L_k$ be as in Lemma~\ref{lem:SplitExtMod}
(contain Virasoro element),
and let $L_{k+1}, \dots, L_s$ be isomorphic to current conformal algebras.
The case $k=0$ was considered in \cite{Kol2011}, so assume $k\ge 1$,
i.e., $L_0$ contains Virasoro elements.

The radical $R$ is a conformal module over $L_i$ for every $i=1,\dots, s$.
If $R$ is trivial over all $L_1,\dots, L_k$ then
$L$ can be presented as $L'\oplus L''$, where
\[
 L' = L_1\oplus \dots \oplus L_k,
\quad L'' = (L_{k+1}\oplus \dots \oplus L_s)\ltimes R.
\]
Here $L'$ is semisimple, $L''$ is of the kind considered
in \cite{Kol2011}.
Since both $L'$ and $L''$ have finite faithful conformal
representations, so is~$L$.
Therefore, we may assume $R$ is a nontrivial $L_1$-module.

Propositions \ref{prop:IdealExists} and   \ref{prop:IdealExists2}
imply the existence
of an ideal $I$ of $L$ such that
$I\cap Z(L)=0$.
Consider the set
$\hat I =\{a\in L\mid h(\partial ) a\in I \ \mbox{for some}\ h\in H\}\supseteq I$.
Since $[L_{(\lambda )} \hat I] \subseteq I[\lambda ]$, this is also an ideal in $L$.
Moreover,
if $I\cap Z(L)=0$ then $\hat I\cap Z(L) = 0$ (recall that the algebra $L$
is torsion-free). Both $L/Z(L)$ and $L/\hat I$ are torsion-free finite Lie conformal algebras of
smaller rank than $L$, thus have finite faithful representations.
The direct sum of these representations would be a faithful finite representation of~$L$.
\end{proof}

\section*{Acknowledgements}
I am grateful to Ivan Shestakov for communicating the
reference \cite{Ner2003}, and to the IME USP (Brazil)
where the major part of this work has been performed.


\begin{thebibliography}{99}

\bibitem{BDK2001}
B.~Bakalov, A.~D'Andrea, V.~G.~Kac,
  Theory of finite pseudoalgebras,
  Adv. Math. 162 (2001) no.~1, 1--140.

\bibitem{BKV1999}
B. Bakalov, V.~G. Kac, A.~A. Voronov,
Cohomology of conformal algebras,
Commun. Math. Phys. 200 (1999) 561--598.

\bibitem{BD2004}
A.~A.~Beilinson, V.~G.~Drinfeld,
Chiral algebras,
 Amer. Math. Soc. Colloquium Publications 51,
 AMS, Providence, RI, 2004.

\bibitem{CK1997}
S.-J.~Cheng, V.~G.~Kac,
Conformal modules,
 Asian J. Math. 1 (1997) 181--193.

\bibitem{CKW1998}
S.-J.~Cheng, V.~G.~Kac, M.~Wakomoto,
Extensions of conformal modules, in {\em Topological
field theory, primitive forms and related topics} (Kyoto, 1996),
79--129, Progr. Math., 160, Birkh\"auser, Boston, MA, 1998.

\bibitem{DK1998}
A.~D'Andrea, V.~G.~Kac,
Structure theory of finite conformal algebras,
Sel. Math., New Ser. 4 (1998) 377--418.

\bibitem{DSK2008}
A. De Sole, V.~G.~Kac,
 Lie conformal algebra cohomology and the
 variational complex,
 Commun. Math. Phys. 292 (2009) 667--719.

\bibitem{FBZ2001}
E. Frenkel, D. Ben-Zvi,
Vertex algebras and algebraic curves,
Mathematical Surveys and Monograps 88,
AMS, Providence, RI, 2001.

\bibitem{JacLie}
N. Jacobson,
Lie algebras,
John Wiley and Sons, New York -- London, 1962.

\bibitem{Kac1996}
V.~G.~Kac,
 Vertex algebras for beginners, second ed.,
 University Lecture Series 10,
 AMS, Providence, RI, 1998.

\bibitem{Kol2008}
P.~S.~Kolesnikov,
Varieties of dialgebras and conformal algebras,
Sib. Math. J. 49 (2008) 257--272.

\bibitem{Kol2011}
P.~S.~Kolesnikov,
On finite representations of conformal algebras,
J. Algebra 331 (2011) 169--193.

\bibitem{Loday2001}
J.-L. Loday,
 Dialgebras,
 in {\it Dialgebras and related operads},
 J.-L. Loday et al, eds, Springer-Verl., Berlin, 2001,
 Lectures Notes in Math., vol. 1763, pp.~1--61.

\bibitem{Ner2003}
Yu. A. Neretin,
 A construction of finite-dimensional faithful representation
 of Lie algebra,
 Rend. Circ. Mat. Palermo (2) Suppl. No. 71 (2003), 159--161.

\bibitem{Proc2005}
C. Procesi,
Lie groups. An approach through invariants and representations,
Springer, New York, 2007.

\bibitem{Roit2000}
M.~Roitman,
Universal enveloping conformal algebras, Sel. Math., New Ser.
 6 (2000), no.~3,  319--345.


\bibitem{Roit2005}
M.~Roitman,
On embedding of Lie conformal algebras into associative
conformal algebras,  J. Lie Theory   15  (2005)
no.~2, 575--588.

\end{thebibliography}
\end{document}